\newfont{\bbb}{msbm10 scaled\magstephalf}
\newfont{\sbbb}{msbm7 scaled\magstephalf}
\def\F{\mbox{\bbb{F}}}
\def\C{\mbox{\bbb{C}}}
\def\R{\mbox{\bbb{R}}}
\def\Z{\mbox{\bbb{Z}}}
\def\rd{\R^d}
\def\rn{\R^n}
\def\zd{\Z^d}
\def\rndu{(\rn)^*}
\def\rddu{(\rd)^*}
\def\et1{e^{2\pi i\theta_1}}
\def\vz{\underline{z}}
\def\vw{\underline{w}}
\def\zjs{|z_j|^2}
\def\D{\Delta}
\def\xd{X_1,\ldots,X_d}
\def\ld{\lambda_1,\ldots,\lambda_d}
\def\k{\mathfrak{k}}
\newtheorem{thm}{Theorem}[section]
\newtheorem{cor}[thm]{Corollary}
\newtheorem{prop}[thm]{Proposition}
\newtheorem{remark}[thm]{Remark}
\newtheorem{lemma}[thm]{Lemma}
\newtheorem{example}[thm]{Example}
\newtheorem{ass}[thm]{Assumption}
\def\squareforqed{\hbox{\rlap{$\sqcap$}$\sqcup$}}
\def\qed{\ifmmode\else\unskip\quad\fi\squareforqed}
\newcounter{sect}\setcounter{sect}{-1}
\definecolor{red}{rgb}{.6,0,0}
\definecolor{green}{rgb}{0,.6,0}
\definecolor{darkgreen}{rgb}{0,0.3,0}
\definecolor{purple}{rgb}{0.5,0,0.5}
\definecolor{darkblue}{rgb}{0,0,0.7}
\definecolor{greenblue}{rgb}{0,0.4,0.5}
\newcommand{\cmt}[1]
{\ifthenelse {\boolean{draft}}
{{\sc \tiny \color{red} #1}}
{}}
\newcommand{\newbb}[1]
{\ifthenelse {\boolean{draft}}
{{\color{darkblue} #1}}
{#1}}
\newcommand{\newbbb}[1]
{\ifthenelse {\boolean{draft}}
{{\color{greenblue} #1}}
{#1}}
\newcommand{\inred}[1]
{\ifthenelse{\boolean{draft}}{{\color{red} #1}}{#1}}
\newcommand{\new}[1]
{\ifthenelse {\boolean{draft}}
{{\color{green} #1}}
{#1}}
\newcommand{\neww}[1]
{\ifthenelse {\boolean{draft}}
{{\color{darkgreen} #1}}
{#1}}
\newcommand{\newb}[1]
{\ifthenelse {\boolean{draft}}
{{\color{blue} #1}}
{#1}}
\newcommand{\del}[1]
{\ifthenelse {\boolean{draft}}
{{\color{magenta} #1}}
{}}
\title{\sc Nonrational Symplectic Toric Reduction}
\author{\sc Fiammetta Battaglia and Elisa Prato}
\date{}
\begin{document}
\maketitle
\begin{abstract}In this article, we introduce symplectic reduction in the framework of nonrational toric geometry. 
When we specialize to the rational case, we get symplectic reduction for the action of a general, not necessarily closed, Lie subgroup of the torus.
\end{abstract}
\section*{Introduction}

If we have a symplectic manifold that is invariant under the Hamiltonian action of a Lie group, {\em symplectic} or {\em Marsden--Weinstein reduction}
\cite{marwei} allows to construct a lower dimensional symplectic manifold by reducing its symmetries. This fundamental operation has inspired a wide 
number of applications throughout geometry and physics.

In this article, we extend symplectic reduction to the context of nonrational toric geometry. We recall that the Delzant theorem \cite{d} establishes a
correspondence between smooth polytopes $\D\subset (\rn)^*$ and compact {\em symplectic toric manifolds}, meaning compact symplectic $2n$--manifolds
with the effective Hamiltonian action of the torus $T^n=\rn/\Z^n$. One of the remarkable features of this theorem is that it provides an explicit construction of
the symplectic manifold from the polytope, following the same principle that allows to construct a complex toric variety from a fan. When generalizing this
construction to simple convex polytopes that are not rational, the resulting spaces turn out to be {\em quasifolds} \cite{pcras,p}.  Quasifolds generalize
manifolds and orbifolds, and they are typically not Hausdorff. Locally, they are quotients of smooth manifolds by the action of countable groups. Similarly to
what happens in the smooth case, the quasifolds $M$ that one gets from the generalized Delzant construction are compact, symplectic, $2n$--dimensional, 
and are endowed with an effective Hamiltonian action of a {\em quasitorus} of dimension $n$. A quasitorus of dimension $n$
is the abelian group and quasifold given by the quotient $D^n=\rn/Q$, where $Q$ is a {\em quasilattice}, namely the $\Z$--span of a set of real 
spanning vectors in $\rn$. We refer to these spaces $M$ as {\em
symplectic toric quasifolds}.

The idea here is to reduce symplectic toric quasifolds of dimension $2n$ with respect to the action of {\em any} subgroup 
$K=\k/(\k\cap Q)\subset D^n$, $\k$ being a subspace of $\rn$. We prove that the resulting space is itself a symplectic toric quasifold, 
of dimension $2(n-\dim(\k))$, with the Hamiltonian quasitorus action of $D^n/K$ (see Theorem~\ref{reduction}). As a consequence, 
we are able to reduce {\em any} symplectic toric {\em manifold} with respect to the action of a {\em general} Lie subgroup $K\subset T^n$ 
(see Corollary~\ref{liscio}); if $K$ is not closed, the resulting space is a symplectic toric quasifold.
Notice thus that the class of symplectic toric quasifolds is closed under symplectic reduction. 
On the other hand, we know that the class of symplectic toric manifolds is not even closed under reduction with respect to the action of a subtorus, 
since the resulting space may be an orbifold.  Moreover, neither the class of symplectic toric manifolds nor the class of symplectic toric orbifolds 
is closed under reduction with respect to the action of a general Lie subgroup.

We state and prove our results for general pointed polyhedra instead of convex polytopes. We recall that a pointed polyhedron is a finite intersection
of closed half--spaces that has at least a vertex: it is a convex polytope if, and only if, it does not contain a ray. In this setting, the resulting symplectic toric 
quasifolds (and manifolds) may be noncompact.

The article is structured as follows: in the first section, we recall the generalized Delzant construction; in the second section, we prove the symplectic
reduction theorem, and in the third section, we discuss some applications. 

\section{The generalized Delzant procedure}
In this section, we briefly recall the extension of the Delzant procedure to the nonrational case \cite[Theorem~1.1]{p}, stated in the case of pointed
polyhedra, following \cite[Theorem~1.1]{cut}. 

We begin by recalling a few relevant facts on pointed polyhedra; for a more detailed account, we refer the reader to Ziegler \cite{ziegler}.
A subset $\D\subset\rndu$ is said to be a {\em polyhedron} if it is given by a finite intersection of closed half--spaces. 
If $\D\subset\rndu$ is an $n$--dimensional polyhedron with $d$ facets, then one can choose $X_1,\ldots,X_d \in\rn$ and 
$\lambda_1,\ldots,\lambda_d\in\R$ such that
\begin{equation}\label{decomp}\D=\bigcap_{j=1}^d\{\;\mu\in\rndu\;|\;\langle\mu,X_j\rangle\geq\lambda_j\;\}.\end{equation}
Each of the vectors $X_1,\ldots,X_d$ is orthogonal to one of the different $d$ facets of $\D$ and points towards its interior. 
We will conveniently refer to these vectors as {\em normal vectors} for $\D$.
A polyhedron can have at most a finite number of vertices; whenever one such vertex exists, we will say that the polyhedron is {\em pointed}.
A polyhedron is pointed if, and only if, it does not contain a line.
Moreover, a pointed polyhedron is a convex polytope if, and only if, it does not contain a ray. A dimension $n$ pointed polyhedron is 
{\em simple} if each of its vertices is contained in exactly $n$ facets. Finally, a simple pointed polyhedron is {\em smooth} if one can 
choose normal vectors so that, for each vertex, the vectors that are orthogonal to the corresponding $n$ facets form a basis of $\Z^n$.

In the nonrational case, lattices are replaced by {\em quasilattices} and tori by {\em quasitori}. A quasilattice $Q$ in $\rn$ is the $\Z$--span of a set of 
real spanning vectors, $Y_1,\ldots,Y_d\in\rn$; $Q$ is a lattice if, and only if, $d=n$. We call quasitorus the quotient $D^n=\rn/Q$. It is a dimension 
$n$ quasifold; it is a regular torus if, and only if, $Q$ is a regular lattice.
We will say that a polyhedron $\D\subset\rndu$ is {\em quasirational} with respect to a given quasilattice $Q$,
if the normal vectors for $\Delta$ can be chosen in $Q$. If $\D$ is quasirational with respect to a lattice, then it is rational in the usual sense. 
It is important to notice that any polyhedron is quasirational with respect to the quasilattice that is generated by any choice of normal vectors.

We are now ready to recall the generalized Delzant procedure. We will outline its proof; 
for additional details, we refer the reader to \cite[Theorem~1.1]{cut}. For the basic definitions and properties of quasifolds we refer the 
reader to \cite{p,kite}.
\begin{thm}\label{thmp1}
Let $Q$ be a quasilattice in $\rn$ and let $\D\subset\rndu$ be an $n$--dimensional simple pointed polyhedron
that is quasirational with respect to $Q$. Assume that $d$ is the number of facets of $\D$ and
consider normal vectors $X_1,\ldots,X_d$ for $\Delta$ that lie in $Q$. For each $(\D,\{\xd\},Q)$,
there exists a $2n$--dimensional symplectic quasifold $(M,\omega)$
endowed with the effective Hamiltonian action of the quasitorus $D^n=\rn/Q$ such that, 
if $\Phi\,\colon M\rightarrow\rndu$ is the corresponding moment mapping, 
then $\Phi(M)=\Delta$. If $\Delta$ is a convex polytope, then $M$ is compact.
\end{thm}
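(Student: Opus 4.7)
The plan is to imitate the classical Delzant construction, which realizes the symplectic toric manifold as a symplectic reduction of $\C^d$ by a suitable subtorus; the nonrational aspect forces the ``subtorus'' to be a quasitorus, but the mechanics carry over. First, I would consider the surjective linear map $\pi\colon\R^d\to\rn$ sending the $j$th standard basis vector $e_j$ to $X_j$. Since the $X_j$ are normal vectors to the $d$ facets of the $n$--dimensional polyhedron $\D$, they span $\rn$, so $\pi$ is indeed surjective and $\mathfrak{n}=\ker\pi$ is a subspace of $\R^d$ of dimension $d-n$. The crucial point is that $\pi(\Z^d)=Q$ by definition of the quasilattice, so $\pi$ descends to a surjection of abelian groups $T^d=\R^d/\Z^d\to D^n=\rn/Q$. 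The kernel of this map is the quasitorus $N=\mathfrak{n}/(\mathfrak{n}\cap\Z^d)$; note that $\mathfrak{n}\cap\Z^d$ need not be a full--rank subgroup of $\mathfrak{n}$, which is exactly why $N$ is a genuine quasifold rather than a torus.

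Next, I would perform the standard Hamiltonian reduction. The torus $T^d$ acts on $\C^d$ with its standard symplectic form, and this action has moment map $\Psi(\vz)=\tfrac12\sum_{j=1}^d|z_j|^2 e_j^*+\sum_{j=1}^d\lambda_j e_j^*$, with values in $(\R^d)^*$. Restricting the action to $N$, the corresponding moment map is $\Psi_\mathfrak{n}=\iota^*\circ\Psi$, where $\iota\colon\mathfrak{n}\hookrightarrow\R^d$ is the inclusion. I would define
\[
Z=\Psi_\mathfrak{n}^{-1}(0)\subset\C^d,\qquad M=Z/N,
\]
and equip $M$ with the reduced symplectic form $\omega$. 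To see that this works, I would verify two things: that $Z$ is a smooth submanifold of $\C^d$, and that the $\mathfrak{n}$--action on $Z$ is locally free, so that the quotient inherits a quasifold structure. Both verifications reduce, via a combinatorial argument, to the simplicity of $\D$: at each point of $Z$ the indices $j$ for which $z_j=0$ correspond to facets of $\D$ containing a common vertex, and simplicity guarantees that the matrix encoding these coordinates has maximal rank, yielding smoothness of $Z$ and local freeness of the action.

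Having constructed $(M,\omega)$, the residual action of $D^n=T^d/N$ on $M$ is automatically Hamiltonian, with a moment map $\Phi\colon M\to\rndu$ characterized, through the identification $\rndu\cong(\R^d)^*/\mathfrak{n}^\circ$ induced by $\pi^*$, by the restriction of $\Psi$ to $Z$. A direct computation identifies the image $\Phi(M)$ with $\D$: a point $\mu\in(\R^d)^*$ lies in the image of $Z$ under $\Psi$ precisely when $\langle\mu,e_j\rangle=\tfrac12|z_j|^2+\lambda_j\geq\lambda_j$ for all $j$, and the compatibility with $\mathfrak{n}^\circ$ yields the defining inequalities $\langle\Phi(p),X_j\rangle\geq\lambda_j$ of $\D$. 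Effectiveness of the $D^n$--action then follows from the fact that the $X_j$ generate $Q$. Finally, if $\D$ is a convex polytope, the level set $Z$ is compact (the quadratic moment map is proper on each preimage), so $M$ is compact.

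The main obstacle, and the principal novelty over the classical Delzant theorem, is making rigorous sense of the quotient $Z/N$ when $N$ is a quasitorus rather than a compact Lie group: the orbit equivalence relation need not be closed, and $M$ is generally non--Hausdorff. I would handle this by appealing to the quasifold formalism developed in \cite{p,kite}, where local models for $M$ are given by smooth symplectic slice quotients by the discrete (but possibly non--finite) isotropy groups $\mathfrak{n}\cap\Z^d$ intersected with appropriate affine subspaces. Extending from the convex polytope case to pointed polyhedra, following the strategy of \cite{cut}, requires in addition verifying that the pointedness hypothesis (absence of a line in $\D$) is exactly what ensures that $Z$ is nonempty and that $\Psi_\mathfrak{n}$ is a submersion along $Z$, with compactness traded for properness of $\Phi$ on $M$.
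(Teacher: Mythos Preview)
Your overall strategy coincides with the paper's: realize $M$ as the symplectic reduction of $(\C^d,\omega_0)$ by the kernel $N$ of the group epimorphism $\Pi\colon T^d\to D^n$ induced by $\pi$, then read off the residual $D^n$--action and its moment map. There is, however, a genuine error in your identification of $N$. You assert that $\pi(\Z^d)=Q$ ``by definition of the quasilattice'' and conclude $N=\mathfrak{n}/(\mathfrak{n}\cap\Z^d)$. This is false in general: the hypothesis of the theorem is only that the normal vectors $X_j$ \emph{lie in} $Q$, not that they generate $Q$ over $\Z$, so one typically has $\pi(\Z^d)\subsetneq Q$. The correct group is
\[
N=\ker\Pi=\pi^{-1}(Q)/\Z^d,
\]
which has the same Lie algebra $\mathfrak{n}=\ker\pi$ as your candidate but is strictly larger whenever $\pi(\Z^d)\neq Q$ (concretely: take $n=d=1$, $X_1=1$, $Q=\tfrac12\Z$; then $\mathfrak{n}=0$ but $N\simeq\Z/2\Z$).

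This matters for the rest of the argument. If you reduce by the smaller group $\mathfrak{n}/(\mathfrak{n}\cap\Z^d)$, the residual action on the quotient is that of $\R^n/\pi(\Z^d)$, not of $D^n=\R^n/Q$; the $D^n$--action you want is then not even well defined on your $M$. Correspondingly, your effectiveness argument (``the $X_j$ generate $Q$'') fails for the same reason. In the paper's construction, effectiveness of the $D^n$--action is automatic precisely because $D^n\simeq T^d/N$ by definition of $N$, and $T^d$ already acts effectively on $\C^d$. Once you replace your $N$ by $\ker\Pi$, the remainder of your sketch (simplicity $\Rightarrow$ $0$--dimensional isotropy on the level set, the quasifold atlas via the formalism of \cite{p,kite}, identification $\Phi(M)=\Delta$, compactness from boundedness of $\Delta$) matches the paper's proof.
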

We say that the quasifold $(M,\omega)$ above is the {\em symplectic toric quasifold} corresponding to $(\D,\{\xd\},Q)$.

\noindent
\begin{sketchofproof}
Consider the standard linear Hamiltonian action of $T^d=\rd/\zd$ on $\C^d$, with its moment mapping $J(\vz)=\sum_{j=1}^d \zjs
e_j^*+\lambda$, $\lambda\in\rddu$ constant.
Consider the surjective linear mapping
\begin{eqnarray*}
\pi\,\colon &\rd \longrightarrow \rn,\\
&e_j \longmapsto X_j
\end{eqnarray*}
and let $N$ be the kernel of the corresponding quasitorus epimorphism $\Pi\,\colon\,T^d\longrightarrow D^n$.
The induced action of $N$ on $\C^d$ is also Hamiltonian, with moment mapping given by
$\Psi=i^*\circ J$, where $i$ is the Lie algebra inclusion $\mbox{Lie}(N)=\ker(\pi)\rightarrow\rd$. Choose the constant
$\lambda$ above to be equal to $\sum_{j=1}^d {\lambda_j} e_j^*$, with $\ld$ as in (\ref{decomp}). 
Notice that, since $\Delta$ is simple, the group $N$ acts on the level set $\Psi^{-1}(0)$ with $0$--dimensional isotropy groups. 
The quotient $\Psi^{-1}(0)/N$ is our symplectic quasifold $M$.
The induced action on $M$ of the quasitorus $D^n=\rn/Q\simeq T^d/N$ is Hamiltonian and its moment mapping is given by 
\begin{equation}\label{mm}
\Phi([\vz])=((\pi^*)^{-1}\circ J)(\vz),
\end{equation}
where $\vz\in\Psi^{-1}(0)$. It is straightforward to check that $\Phi(M)=\Delta$.
\qed\end{sketchofproof}

We conclude this introductory section by recalling from \cite[Theorem~3.2]{cx} the construction of an explicit atlas for $M$; 
this will be an essential ingredient in the proof of our main result. Similarly to what happens in the smooth case, 
we cover $M$ with an atlas that is indexed by the set of vertices of $\D$.
Take one such vertex $\nu$; we define a quasifold chart $(U_{\nu},\rho_{\nu},\tilde{U}_{\nu}/\Gamma_\nu)$, as follows.
Suppose, up to renumbering, that the vertex $\nu$ is the intersection of the facets that are orthogonal to the first $n$ normal 
vectors, $X_1,\ldots,X_n$. We can write the remaining normal vectors uniquely as follows
$$
X_j=\sum_{h=1}^{n}a_{jh}X_h,\quad j=n+1,\ldots,d.
$$
Consider now the open subsets
$$U_{\nu}=\left\{\,[\vz]\in M\,|\,  z_j\neq 0, j=n+1,\ldots,d \,\right\}.$$
Formula (\ref{mm}) implies that $|z_j|^2+\lambda_j=\sum_{h=1}^{n}(a_{jh}|z_h|^2-\lambda_h)$ for
every $\vz\in\Psi^{-1}(0)$. Then the set
$$\tilde{U}_{\nu}=\left\{\,(z_1,\ldots,z_n)\in \C^n \,|\, \sum_{h=1}^{n}(a_{jh}|z_h|^2-\lambda_h)-\lambda_j>0,j=n+1,\ldots,d \,\right\}$$
is non--empty.
If we take
$$
w_j=\sqrt{\sum_{h=1}^{n}a_{jh}(|z_h|^2+\lambda_h)-\lambda_j},\quad j=n+1,\ldots,d,
$$
then the mapping
$$
\begin{array}{cccc}
\tilde{\rho}_{\nu}\,\colon\,&\tilde{U}_{\nu}&\longrightarrow&U_{\nu}\\
&(z_1,\ldots,z_n)&\longmapsto&[z_1:\cdots:z_n:w_{n+1}:\cdots:w_d]
\end{array}
$$
induces a homeomorphism 
\begin{equation}\label{rhonu}
\rho_{\nu}\,\colon\,\tilde{U}_{\nu}/\Gamma_\nu\rightarrow U_{\nu},
\end{equation}
where
$\Gamma_\nu$ is the countable group $N\cap[(S^1)^n\times (1)^{d-n}]$.

\section{Symplectic reduction}
Let us consider a dimension $n$ simple pointed polyhedron $\Delta\subset\rndu$ that is quasirational with respect to a quasilattice $Q$. 
Take normal vectors, $\xd$, for $\D$ in $Q$ and apply the generalized Delzant procedure to $(\D,\{\xd\},Q)$. 
As we have seen, this yields a symplectic $2n$--quasifold $M$ with the effective Hamiltonian action of the quasitorus $D^n=\rn/Q$.

Let now $\mathfrak{k}$ be a nontrivial $k$--dimensional vector subspace of $\R^n$.  
The quotient $K=\mathfrak{k}/\mathfrak{k}\cap Q$ is a quasifold and a subgroup of the quasitorus  $D^n$.
It is a quasitorus itself when $\hbox{span}_{\R}(\mathfrak{k}\cap Q)=\mathfrak{k}$. 
We have the following exact sequences:
$$0\longrightarrow\mathfrak{k}\stackrel{j}{\longrightarrow}\R^n\stackrel{p}{\longrightarrow}\R^n/\mathfrak{k}\longrightarrow0$$
$$0\longrightarrow(\R^n/\mathfrak{k})^*\stackrel{p^*}{\longrightarrow}(\R^n)^*\stackrel{j^*}{\longrightarrow}\mathfrak{k}^*\longrightarrow0$$
Notice, in particular,  that $p^*((\R^n/\mathfrak{k})^*)=\ker j^*$.
Consider now the induced action of $K$ on $M$; this action is Hamiltonian and the corresponding moment mapping is given by
$\Phi_\mathfrak{k}=j^*\circ\Phi$. Consider now a value $\xi$ of this mapping; we can assume, up to a translation of $\D$, that $\xi=0$.
In accordance with the smooth case, we will define the orbit space
$$M_\mathfrak{k}=\Phi_\mathfrak{k}^{-1}(0)/K$$
to be the {\em symplectic reduced space} for the action of $K$ at the value $0$.
We will devote the rest of the section to showing that $M_\mathfrak{k}$ is a symplectic toric quasifold.

We begin by noticing that the mapping $\Phi$ sends all points of the level set
$\Phi_\mathfrak{k}^{-1}(0)$ onto the set $\D_{\mathfrak{k}}=\D\cap(\ker(j^*))$, which is therefore non--empty. 
Notice that, by using (\ref{decomp}), we get
$$
\begin{array}{ccl}
\D_{\mathfrak{k}}&=&\bigcap_{j=1}^{d}\{\;\mu\in\ker j^*\;|\;\langle\mu,X_j\rangle\geq\lambda_j\;\}\\
&=&\bigcap_{j=1}^{d}\{\;\nu\in(\R^n/\mathfrak{k})^*\;|\;\langle p^*(\nu),X_j\rangle\geq\lambda_j\;\}\\
&=&\bigcap_{j=1}^{d}\{\;\nu\in(\R^n/\mathfrak{k})^*\;|\;\langle \nu,p(X_j)\rangle\geq\lambda_j\;\}.
\end{array}
$$
Thus $\D_{\mathfrak{k}}$ itself is a polyhedron. Since $\D$ is pointed, so is $\D_{\mathfrak{k}}$. 
Now, if $d_\mathfrak{k}$ is the number of its facets, it is always possible to choose $d_\mathfrak{k}$ among the $d$ half--spaces 
above so that, up to renumbering,
\begin{equation}\label{deltakappa}
\D_{\mathfrak{k}}=\bigcap_{j=1}^{d_\mathfrak{k}}\{\;\nu\in(\R^n/\mathfrak{k})^*\;|\;\langle \nu,p(X_j)\rangle\geq\lambda_j\;\}.
\end{equation}
From now on we will make the following fundamental assumption.
\begin{ass}{\rm The induced action of $K$ on $\Phi_\mathfrak{k}^{-1}(0)$ has $0$--dimensional isotropy groups.}
\end{ass}
\begin{remark}{\rm Notice that this assumption is standard in classical symplectic reduction. It has a number of crucial
implications on the geometry of the pointed polyhedron $\D_{\mathfrak{k}}$, as explained in the following proposition.}
\end{remark}
\begin{prop}\label{free action} 
Consider the induced action of $K$ on $M$. The group $K$ acts on $\Phi_\mathfrak{k}^{-1}(0)$ with $0$--dimensional isotropy groups if, and only if, the pointed polyhedron $\D_{\mathfrak{k}}$ has dimension $n-k$, is simple and the $d_\mathfrak{k}$ half--spaces in (\ref{deltakappa}) are unique, in the sense that $\D_{\mathfrak{k}}$ is contained in the interior of the remaining $d-d_\k$.
\end{prop}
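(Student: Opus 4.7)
My plan is to translate the $K$-action isotropy condition on $\Phi_\mathfrak{k}^{-1}(0)$ into a purely combinatorial condition on $\Delta_\mathfrak{k}$ via the explicit form of $\Phi$, and then show that this condition is equivalent to (a) $\dim\Delta_\mathfrak{k}=n-k$, (b) simplicity of $\Delta_\mathfrak{k}$, and (c) uniqueness of the $d_\mathfrak{k}$ essential half-spaces.

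First I would identify the isotropy Lie algebra. From the moment--map formula $|z_j|^2+\lambda_j=\langle\Phi([\vz]),X_j\rangle$, the condition $z_j=0$ is equivalent to $\mu:=\Phi([\vz])\in F_j$. Since the isotropy Lie algebra of $T^d$ at $\vz$ is spanned by the $e_j$ with $z_j=0$, pushing forward by $\pi$ and intersecting with $\mathfrak{k}$ shows that the isotropy Lie algebra of $K$ at $[\vz]$ is $\mathfrak{k}\cap\mathrm{span}\{X_j:\mu\in F_j\}$. Setting $S(\mu)=\{j:\mu\in F_j\}$, the proposition reduces to the equivalence between
\[
(\ast)\qquad \mathfrak{k}\cap\mathrm{span}\{X_j:j\in S(\mu)\}=\{0\}\ \mbox{ for every }\mu\in\Delta_\mathfrak{k},
\]
and conditions (a)--(c).

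For the forward direction, the crucial step is analysing $\Delta_\mathfrak{k}$ at a vertex via its tangent cone. By simplicity of $\Delta$, the vectors $\{X_j:j\in S(\mu)\}$ are always linearly independent and their cardinality equals the codimension of the smallest face of $\Delta$ whose relative interior contains $\mu$; a dimension count then shows $(\ast)\Rightarrow|S(\mu)|\leq n-k$ for every $\mu\in\Delta_\mathfrak{k}$, so $\Delta_\mathfrak{k}$ meets no face of $\Delta$ of dimension less than $k$. Conversely, at a vertex $\nu$ of $\Delta_\mathfrak{k}\subset(\rn/\mathfrak{k})^*$, the projected active normals $\bar X_j:=p(X_j)$ for $j\in S(\nu)$ must span $\rn/\mathfrak{k}$ to pin down $\nu$ uniquely in the $(n-k)$-dimensional ambient, so $|S(\nu)|\geq n-k$, and hence $|S(\nu)|=n-k$. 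Combined with $(\ast)$ this yields $\mathfrak{k}\oplus\mathrm{span}\{X_j:j\in S(\nu)\}=\rn$; dually, $\ker j^*$ is transverse to the $k$-dimensional lineality space of the local cone of $\Delta$ at $\nu$, so $\Delta\cap\ker j^*$ is locally an $(n-k)$-dimensional simplicial cone at $\nu$ with exactly $n-k$ facets. Convexity propagates the local dimension to give (a), the simplicial count at $\nu$ gives (b), and (c) follows by the same vertex count: two distinct $X_j$'s cutting out the same facet of $\Delta_\mathfrak{k}$, or a redundant $F_j$ meeting $\Delta_\mathfrak{k}$, would force $|S(\nu)|$ above $n-k$ at some vertex.

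For the converse, at a vertex $\nu$ of $\Delta_\mathfrak{k}$ conditions (a)--(c) give $|S(\nu)|=n-k$ with $\{\bar X_j:j\in S(\nu)\}$ linearly independent in $\rn/\mathfrak{k}$; elementary linear algebra then shows $\{X_j\}_{j\in S(\nu)}$ together with any basis of $\mathfrak{k}$ is linearly independent in $\rn$, which is precisely $(\ast)$ at $\nu$. For a general $\mu\in\Delta_\mathfrak{k}$, $S(\mu)\subseteq S(\nu)$ for any vertex $\nu$ in the closure of the face of $\Delta_\mathfrak{k}$ whose relative interior contains $\mu$, so $(\ast)$ propagates from vertices. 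The main obstacle I anticipate is establishing (a) in the forward direction: $(\ast)$ at a generic relative-interior point does not by itself rule out a degenerate lower-dimensional slice, and one genuinely needs the vertex tangent-cone analysis, combined with the transversality extracted from $(\ast)$, to pin down the correct local--and hence global--dimension.
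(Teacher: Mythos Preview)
Your proposal is correct and the linear-algebraic reduction to condition $(\ast)$ is clean. The route, however, differs noticeably from the paper's.

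The paper never isolates the isotropy Lie algebra as $\mathfrak{k}\cap\mathrm{span}\{X_j:j\in S(\mu)\}$ and never formulates $(\ast)$. For the forward direction it argues (a) directly: if $\ker j^*$ missed the interior of $\Delta$ it would meet $\Delta$ in a face and hence contain a vertex of $\Delta$, which is a $D^n$--fixed point, contradicting the isotropy hypothesis. For (b) and (c) it supposes some vertex $\nu_{\mathfrak{k}}$ of $\Delta_{\mathfrak{k}}$ carries an extra active index $h$, writes $Y=X_h-\sum a_jX_j\in\mathfrak{k}$ explicitly, and exhibits the one--parameter subgroup $\exp(tY)$ fixing the corresponding point of $M$ in the homogeneous coordinates $[z_1:\cdots:z_d]$. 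For the converse it again works in coordinates: it produces explicit generators $Y_j=X_j-\sum_h a_{jh}X_h$ of $\mathfrak{k}$, writes a general $Y\in\mathfrak{k}$ in this basis, and checks that $\exp(tY)\cdot[\hat{\vz}]=[\hat{\vz}]$ forces $t$ into a countable set because the relevant $\hat z_j$ are nonzero. Your approach buys a cleaner, coordinate--free argument and makes the convex--geometric content (the inequality $|S(\mu)|\le n-k$ and the vertex pinning $|S(\nu)|\ge n-k$) transparent; the paper's approach buys explicit formulae that are reused verbatim in the proofs of Lemma~\ref{rotazioni} and Theorem~\ref{reduction}, so the concrete generators $Y_j$ and the coordinate expressions are not wasted effort there. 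Your anticipated obstacle about (a) is in fact handled more simply by the paper's ``fixed vertex'' observation than by your tangent--cone transversality analysis, though both are valid.
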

\begin{proof}
Assume first that $K$ acts on $\Phi_\mathfrak{k}^{-1}(0)$ with $0$--dimensional isotropy groups.
Suppose that the subspace $\ker j^*$ has empty intersection with the interior of $\Delta$. Then
$\Delta\cap\ker j^*\neq\emptyset$ implies that $\ker j^*$ intersect $\Delta$ in one of its faces. 
Therefore $\ker j^*$ contains a vertex of $\Delta$. This contradicts the hypothesis, since points
in $M$ that are sent to vertices of $\Delta$ are fixed by the $D^n$--action.
Thus $\ker j^*$ has non--empty intersection with the interior of $\Delta$, which implies that $\Delta_\k$
has dimension $n-k$. Now, if we take any vertex $\nu_{\mathfrak{k}}\in\Delta_{\mathfrak{k}}$, we can assume, again up to renumbering, that
$$\nu_{\mathfrak{k}}=\cap_{j=1}^{n-k}\{\;\nu\in(\R^n/\mathfrak{k})^*\;|\;\langle \nu,p(X_j)\rangle=\lambda_j\;\}.$$
Suppose now that there exists a vertex $\nu_{\mathfrak{k}}$, written as above, and an index $h\notin\{1,\ldots,n-k\}$, with the property that
$$\nu_{\mathfrak{k}}\in\{\;\nu\in(\R^n/\mathfrak{k})^*\;|\;\langle \nu,p(X_h)\rangle=\lambda_h\;\}.$$
If $h\in\{n-k+1,\ldots,d_\mathfrak{k}\}$, this would contradict simplicity; if $h\in\{d_\mathfrak{k}+1,\ldots,d\}$, 
this would contradict the second part of the thesis. Let us show that this is indeed not possible. 
Let $\hat{\vz}\in\Psi^{-1}(0)$ be such that $\Phi([\hat{\vz}])=p^*(\nu_{\mathfrak{k}})$. Then it is easy to verify that $\hat{z}_i=0$ if, and only if, 
$i=1,\ldots,n-k,h$. Since $\{p(X_1),\ldots,p(X_{n-k})\}$ is a basis
of $\R^n/\mathfrak{k}$, we can write $p(X_h)=\sum_{j=1}^{n-k}a_jp(X_j)$. Therefore the element
$Y=X_h-\sum_{j=1}^{n-k}a_jX_j$ belongs to $\mathfrak{k}$. For each $t\in\R$ and $[\vz]\in M$, we have
$$\begin{array}{l}\exp(tY)[z_1:\cdots:z_{n-k}:z_{n-k+1}:\cdots:z_h:\cdots:z_d]=\\

[e^{-2\pi i t a_1}z_1:\cdots:e^{-2\pi i t a_{n-k}}z_{n-k}:z_{n-k+1}:\cdots:e^{2\pi i t}z_h:\cdots:z_d].
\end{array}$$ 
Hence the isotropy of $K$ at the point $[\hat{\vz}]$ above has dimension at least $1$. By assumption, this is not possible.

Conversely, suppose that the pointed polyhedron $\D_{\mathfrak{k}}$ has dimension $n-k$, is simple and that the $d_\mathfrak{k}$ 
half--spaces in (\ref{deltakappa}) are unique. If we consider $[\hat{\vz}]\in \Phi_\mathfrak{k}^{-1}(0)$,
then $\Phi([\hat{\vz}])$ lies in an (open) face of $\Delta_\k$. Let $\nu_\k$ be a vertex in the closure of this face and write $\nu_k$ as above. Then, by hypothesis, $\hat{z}_j\neq0$ for all $j=n-k+1,\ldots,d$.
On the other hand, each of the vectors $\{p(X_{n-k+1}),\ldots,p(X_{d})\}$ can be uniquely expressed
as a linear combination of  $\{p(X_1),\ldots,p(X_{n-k})\}$:
$$p(X_j)=\sum_{h=1}^{n-k}a_{jh}p(X_h),\quad j=n-k+1,\ldots,d.$$
It is easy to check that the $d-n+k$ vectors $Y_j=X_j-\sum_{h=1}^{n-k}a_{jh}X_h$, $j=n-k+1,\ldots,d$, define a set of generators of $\mathfrak{k}$.
Thus, if we take any non--zero $Y\in\k$, we can write it as $Y=\sum_{j=n-k+1}^{d}b_{j}Y_{j}$, with at least one $b_j\neq 0$;
we assume, for simplicity, that $b_{n-k+1}\neq0$. Observe that
$Y=\pi\left(\sum_{j=n-k+1}^{d}b_{j}(e_j-\sum_{h=1}^{n-k}a_{jh}e_h)\right)$.
Now suppose $\exp_{D^n}(tY)\cdot[\hat{\vz}]=[\hat{\vz}]$ for some $t\in\R$. 
Then, there exists
$R=\sum_{j=1}^dr_je_j\in\R^d$, with $\pi(R)\in Q$, such that 
\begin{equation}\label{identity}
e^{2\pi i tb_{n-k+1}}e^{2\pi i r_{n-k+1}}\hat{z}_{n-k+1}=\hat{z}_{n-k+1}
\end{equation}
Since $\hat{z}_{n-k+1}\neq0$ we have that (\ref{identity}) is satisfied only for a countable set of $t\in\R$. Therefore
$K$ acts on the level set with $0$--dimensional isotropy groups.
\qed\end{proof}

Apply now the generalized Delzant construction to $(\D_\mathfrak{k},\{p(X_1),\ldots,p(X_{d_\mathfrak{k}})\},p(Q))$ and let $X_\mathfrak{k}$ 
denote the corresponding symplectic toric quasifold. We recall the construction of $X_\mathfrak{k}$ from the proof of Theorem~\ref{thmp1}.
Let $J_\mathfrak{k}(\vz)=\sum_{j=1}^{d_\mathfrak{k}}(|z_j|^2+\lambda_j)e_j^*$ be the moment mapping with respect to the standard action
of $T^{d_\mathfrak{k}}$ on $\C^{d_\mathfrak{k}}$. Consider the linear projection 
\begin{eqnarray*}
\pi_\mathfrak{k}\,\colon &\R^{d_\mathfrak{k}} \longrightarrow \rn/\mathfrak{k},\\
&e_j \longmapsto p(X_j)
\end{eqnarray*}
and notice that $\pi_\mathfrak{k}=p\circ\pi_{|\R^{d_\mathfrak{k}}}$. 
The kernel of the corresponding epimorphism
$\Pi_\mathfrak{k}\,\colon\,T^{d_\mathfrak{k}}\longrightarrow (\R^n/\k)/p(Q)$ is given by the ($d_\k-n+k$)--dimensional group
$$
\begin{array}{ccl}N_\mathfrak{k}&=&\exp\{X\in\R^{d_\mathfrak{k}}\;|\;\pi_\mathfrak{k}(X)\in p(Q)\}\\
&=&\exp\{X\in\R^{d_\mathfrak{k}}\;|\;\pi_{|\R^{d_\mathfrak{k}}}(X)\in Q+\mathfrak{k}\}.
\end{array}
$$
Its induced action on $\C^{d_\mathfrak{k}}$ is Hamiltonian, with moment mapping $\Psi_\mathfrak{k}=i^*_\mathfrak{k}\circ J_\mathfrak{k}$, where
$i_\mathfrak{k}$ is Lie algebra inclusion $\mbox{Lie}(N_\mathfrak{k})=\ker(\pi_\mathfrak{k})\rightarrow\R^{d_\mathfrak{k}}.$ Then we have
\begin{equation}\label{delzantspace}
\begin{array}{ccl}
X_\mathfrak{k}&=&\Psi_\mathfrak{k}^{-1}(0)/N_\mathfrak{k}
\end{array}.
\end{equation}
Notice that $(\R^n/\k)/p(Q)$ can be identified with $D^n/K$.
We are now ready to state our main result.
\begin{thm}\label{reduction}  Consider the induced action of $K$ on $M$ with moment mapping $\Phi_\mathfrak{k}=j^*\circ\Phi$. 
Assume that $K$ acts on $\Phi_\mathfrak{k}^{-1}(0)$ with $0$--dimensional isotropy groups.
Then the orbit space $M_\mathfrak{k}=\Phi_\mathfrak{k}^{-1}(0)/K$ is a symplectic quasifold of dimension $2(n-k)$, 
acted on by the quasitorus $D^n/K$. Moreover, $M_\mathfrak{k}$ is equivariantly symplectomorphic to the symplectic toric quasifold $X_\k$ corresponding to 
$(\D_\mathfrak{k},\{p(X_1),\ldots,p(X_{d_\mathfrak{k}})\},p(Q))$.
\end{thm}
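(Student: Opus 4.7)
The plan is to use reduction in stages to realize $M_\k$ as a symplectic reduction of $\C^d$ by an enlarged group $\tilde K$, and then to identify this reduction with $X_\k$ via an explicit section.

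First, observe that a point $[\vz]\in M$ lies in $\Phi_\k^{-1}(0)$ exactly when $J(\vz)\in(p\circ\pi)^*((\rn/\k)^*)$, which, setting $\tilde\pi=p\circ\pi$ and letting $\tilde{\imath}\colon\ker\tilde\pi=\pi^{-1}(\k)\hookrightarrow\rd$ denote the inclusion, is equivalent to $\tilde\Psi(\vz):=(\tilde{\imath}^*\circ J)(\vz)=0$. Setting $\tilde K=\Pi^{-1}(K)\subset T^d$, which has Lie algebra $\pi^{-1}(\k)$ and satisfies $\tilde K/N\cong K$, reduction in stages identifies
$$M_\k=\tilde\Psi^{-1}(0)/\tilde K,$$
with an induced Hamiltonian action of $T^d/\tilde K\cong D^n/K$. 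The isotropy assumption translates into $\tilde K$ acting with $0$--dimensional stabilisers on $\tilde\Psi^{-1}(0)$, so $\tilde\Psi$ is a submersion along its zero set and $\dim M_\k=2d-2\dim\pi^{-1}(\k)=2(n-k)$.

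Next, I construct an explicit comparison map between $X_\k$ and $M_\k$. For any $\vz\in\tilde\Psi^{-1}(0)$, its first $d_\k$ coordinates lie in $\Psi_\k^{-1}(0)$, because the moment--map conditions for both spaces both read $\zjs+\lambda_j=\langle\nu,p(X_j)\rangle$ for a common $\nu\in(\rn/\k)^*$. Conversely, for $\vz_\k\in\Psi_\k^{-1}(0)$, the uniqueness of $\nu$ (forced by $\{p(X_1),\ldots,p(X_{d_\k})\}$ spanning $\rn/\k$) combined with Proposition~\ref{free action}, which ensures $\langle\nu,p(X_j)\rangle>\lambda_j$ for $j>d_\k$, lets me define a continuous section
$$s\colon\Psi_\k^{-1}(0)\to\tilde\Psi^{-1}(0),\qquad(z_1,\ldots,z_{d_\k})\mapsto(z_1,\ldots,z_{d_\k},w_{d_\k+1},\ldots,w_d),$$
with $w_j=\sqrt{\langle\nu,p(X_j)\rangle-\lambda_j}>0$. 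A direct check shows that $s$ intertwines the $N_\k$--action on the source with the $\tilde K$--action on the target: any lift $(Y_1,\ldots,Y_{d_\k},0,\ldots,0)\in\rd$ of an element of the Lie algebra of $N_\k$ automatically has $\pi$--image in $\k+Q$. Hence $s$ descends to a continuous $D^n/K$--equivariant map $\bar s\colon X_\k\to M_\k$.

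Finally, I verify that $\bar s$ is a symplectomorphism of quasifolds. Injectivity follows because the positivity of the $w_j$ pins down the phases uniquely in any $\tilde K$--lift, while surjectivity uses the spanning property of $\{p(X_1),\ldots,p(X_{d_\k})\}$ to show that $\tilde K$ acts transitively on the phases of $(z_{d_\k+1},\ldots,z_d)$ over each $\vz_\k$. That $\bar s$ preserves the symplectic form can be checked at the total--space level: the identity $s^*\omega_{\C^d}=\omega_{\C^{d_\k}}$ holds on $\Psi_\k^{-1}(0)$ since the extra coordinates $w_j$ are real and hence $dy_j\equiv 0$ on the image of $s$, and this identity descends to the reduced forms. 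To upgrade $\bar s$ to a diffeomorphism of quasifolds, I transport the atlas of $X_\k$ recalled at the end of Section~1 through $\bar s$: at each vertex $\nu_\k$ of $\D_\k$, the chart $(\tilde U_{\nu_\k}/\Gamma_{\nu_\k},\rho_{\nu_\k})$ pulls back to a chart of $M_\k$ of exactly the form produced by the same construction applied to the relevant $n-k$ normal vectors. The main obstacle, in my view, is this last verification: one must check that the countable group $\tilde K\cap[(S^1)^{n-k}\times(1)^{d-n+k}]$ obtained on the $M_\k$--side from the reduction--in--stages picture agrees, under $s$, with the group $\Gamma_{\nu_\k}=N_\k\cap[(S^1)^{n-k}\times(1)^{d_\k-n+k}]$ appearing in the atlas of $X_\k$.
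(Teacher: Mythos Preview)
Your argument is correct and is essentially a repackaging of the paper's proof. The paper proceeds by first proving Lemma~\ref{rotazioni}, which rotates the last $d-d_\k$ coordinates of any point of $\Phi_\k^{-1}(0)$ to be real nonnegative and thereby defines a map $g\colon M_\k\to X_\k$; it then builds charts on $M_\k$ directly from the charts $(U_\nu,\rho_\nu,\tilde U_\nu/\Gamma_\nu)$ of $M$ and checks that $g$ is the identity on them. Your section $s$ is precisely the inverse of this $g$ at the level of $\C^d$, and your injectivity, surjectivity, and chart--group verifications coincide with those in Lemma~\ref{rotazioni} and in the paper's chart construction (in particular, the identity $\tilde K\cap[(S^1)^{n-k}\times(1)^{d-n+k}]=\Gamma_{\nu_\k}$ you flag as the main obstacle is exactly the step the paper isolates). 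The one genuine organisational difference is your reduction--in--stages framing, which replaces the two--step quotient $(\Psi^{-1}(0)/N)/K$ by the single quotient $\tilde\Psi^{-1}(0)/\tilde K$ inside $\C^d$; this lets you argue entirely at the level of $\C^d$ and avoid passing through $M$, at the small cost that your ``transport the atlas'' step should, strictly speaking, be accompanied by the check (which the paper does explicitly via the maps $\tilde\Psi_\k^\nu$) that the transported charts are compatible with the charts of $M$ at a nearby vertex $\nu$ of $\Delta$, so that the quasifold structure on $M_\k$ is the one naturally induced from $M$ rather than merely one declared by fiat.
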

To prove the above theorem we first need the following 
\begin{lemma}\label{rotazioni}
Consider $[\vw]\in\Phi_\mathfrak{k}^{-1}(0)$, then there exists 
$[\tilde{\vw}]\in\Phi_\mathfrak{k}^{-1}(0)$ such that
$w_j\in[0,+\infty)$ for $j=d_\mathfrak{k}+1,\ldots,d$ and
$[[\vw]]=[[\tilde{\vw}]]$ in $M_{\mathfrak{k}}$. Moreover 
$[\tilde{w}_1:\cdots:\tilde{w}_{d_{\mathfrak{k}}}]$ belongs to $X_{\mathfrak{k}}$ and, if $[\hat{w}]$ is another element in 
$\Phi_{\mathfrak{k}}^{-1}(0)$ having the same properties as $\tilde{\vw}$, then 
$[\tilde{w}_1:\cdots:\tilde{w}_{d_{\mathfrak{k}}}]=[\hat{w}_1:\cdots:\hat{w}_{d_{\mathfrak{k}}}]$ in $X_{\mathfrak{k}}$.
\end{lemma}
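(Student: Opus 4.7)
The plan is to lift the $K$-action on $M$ to $\C^d$ via the subspace $\k_\pi:=\pi^{-1}(\k)\subset\R^d$ and use this lift to rotate the phases of the last $d-d_\k$ coordinates of $\vw$. A preliminary observation is that these coordinates cannot vanish: from the moment-map identity $|w_j|^2+\lambda_j=\langle\Phi([\vw]),X_j\rangle$ together with the fact (Proposition~\ref{free action}) that $\Phi([\vw])\in\D_\k$ lies strictly inside the half-spaces indexed by $j>d_\k$, we get $|w_j|^2>0$. So I may write $w_j=r_je^{2\pi i\theta_j}$ with $r_j>0$ for $j=d_\k+1,\ldots,d$ and look for $X=(a_1,\ldots,a_d)\in\k_\pi$ with $a_j\equiv-\theta_j\pmod 1$ for those indices; then $\tilde{\vw}:=\exp(X)\cdot\vw$ is the required representative, automatically still in $\Psi^{-1}(0)\cap\Phi_\k^{-1}(0)$ by $K$-invariance of the level sets.

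Producing such an $X$ is a surjectivity statement for the projection $\k_\pi\to\R^{d-d_\k}$ onto the last coordinates. Given target values $(-\theta_j)_{j>d_\k}$, I need to solve $\sum_{j\leq d_\k}a_j\,p(X_j)=-\sum_{j>d_\k}\theta_j\,p(X_j)$ in $\R^n/\k$, which is possible because $\{p(X_1),\ldots,p(X_{d_\k})\}$ spans $\R^n/\k$; this span property follows from Proposition~\ref{free action}, since these are the facet normals of the $(n-k)$-dimensional pointed polyhedron $\D_\k$.

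Next I would check that $(\tilde{w}_1,\ldots,\tilde{w}_{d_\k})$ represents a point of $X_\k=\Psi_\k^{-1}(0)/N_\k$. Chasing the dual exact sequences from the previous page, $\Phi_\k([\tilde{\vw}])=0$ unpacks to $J(\tilde{\vw})\in(p\circ\pi)^*((\R^n/\k)^*)$, that is, $J(\tilde{\vw})$ annihilates $\ker(p\circ\pi)=\pi^{-1}(\k)=\k_\pi$. Restricting this annihilation to $\k_\pi\cap\R^{d_\k}=\ker\pi_\k=\mathrm{Lie}(N_\k)$ gives exactly $\Psi_\k(\tilde{w}_1,\ldots,\tilde{w}_{d_\k})=0$. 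For uniqueness, a second choice $\hat{\vw}$ with the same properties differs from $\tilde{\vw}$ by an element $\exp(Y)\in T^d$ with $\Pi(\exp(Y))\in K$, hence with $\pi(Y)\in\k+Q$; since $\tilde{w}_j$ and $\hat{w}_j$ are positive reals for $j>d_\k$, the tail of $Y$ is integral, and modulo $\Z^d$ we may take $Y_j=0$ there. Then $\pi_{|\R^{d_\k}}(Y_1,\ldots,Y_{d_\k})\in\k+Q$, which is precisely the Lie-algebra condition for $(Y_1,\ldots,Y_{d_\k})$ to belong to $\mathrm{Lie}(N_\k)$, so the resulting element of $N_\k$ identifies the two truncations in $X_\k$.

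The main obstacle is the bookkeeping across the four spaces $\R^d$, $\R^{d_\k}$, $\R^n$, $\R^n/\k$ and their associated quasitori and subgroups; in particular, the cleanest formulation of the proof hinges on the translation of the moment-level condition $\Phi_\k=0$ into the annihilation of $J$ on the Lie algebra $\k_\pi$ via the dual exact sequences, and on the parallel identification of $\mathrm{Lie}(N_\k)$ as $\k_\pi\cap\R^{d_\k}$. Once this dictionary is in place, everything else reduces to linear algebra and the structural facts about $\D_\k$ furnished by Proposition~\ref{free action}.
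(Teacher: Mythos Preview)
Your argument is correct and follows the same three-step skeleton as the paper's proof: (i) rotate the last $d-d_\k$ coordinates into $[0,\infty)$ using an element that projects to $K$; (ii) show that the truncated vector lies in $\Psi_\k^{-1}(0)$; (iii) obtain uniqueness by observing that the tail of the identifying group element must be integral. The difference is one of style rather than strategy. The paper fixes a vertex $\nu_\k$ of $\Delta_\k$, produces explicit generators $Y_j=X_j-\sum_h a_{jh}X_h$ of $\k$ and their lifts $R_j\in\R^d$, and carries out all three steps in those coordinates; these objects are then reused verbatim in the proof of Theorem~\ref{reduction}. You instead phrase everything through the dual exact sequences: the rotation becomes surjectivity of $\k_\pi\to\R^{d-d_\k}$, and step~(ii) becomes the clean identification $\ker\pi_\k=\k_\pi\cap\R^{d_\k}$ together with the fact that $J(\tilde\vw)$ annihilates all of $\k_\pi$. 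Your formulation is more economical for this lemma in isolation; the paper's explicit basis is an investment that pays off in the construction of charts later on.

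One small wording issue: in the uniqueness paragraph you write that $\pi_{|\R^{d_\k}}(Y_1,\ldots,Y_{d_\k})\in\k+Q$ is ``the Lie-algebra condition for $(Y_1,\ldots,Y_{d_\k})$ to belong to $\mathrm{Lie}(N_\k)$''. That is not quite right: $\mathrm{Lie}(N_\k)=\ker\pi_\k$, whereas what you have is the condition for $\exp(Y_1,\ldots,Y_{d_\k})$ to lie in $N_\k$ itself (recall $N_\k=\exp\{X\in\R^{d_\k}:\pi_{|\R^{d_\k}}(X)\in Q+\k\}$). The conclusion you draw from it is the correct one, so this is only a matter of phrasing.
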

\begin{proof}
We begin by writing the moment mapping $\Phi_{\mathfrak{k}}$ and the action of $K$ explicitly.
In order to do so, we choose a vertex $\nu_\mathfrak{k}$ of
$\Delta_\mathfrak{k}$ and we order the normal vectors for $\Delta$ so that $\{p(X_1),\ldots,p(X_{n-k})\}$ are the normal vectors corresponding to the facets 
meeting at $\nu_\mathfrak{k}$. 
Argue as in the proof of Proposition~\ref{free action}, and write
$$p(X_j)=\sum_{h=1}^{n-k}a_{jh}p(X_h),\quad j=n-k+1,\ldots,d.$$
Recall that the $d-n+k$ vectors $Y_j=X_j-\sum_{h=1}^{n-k}a_{jh}X_h$, $j=n-k+1,\ldots,d$, define a set of generators of $\mathfrak{k}$.
Notice that, for each $j=n-k+1,\ldots,d$, (\ref{mm}) implies
$$\begin{array}{ccl}\langle \Phi_\mathfrak{k}([w_1:\cdots:w_{d}]),Y_j\rangle&=&\langle \Phi([w_1:\cdots:w_{d}]),j(Y_j)\rangle\\
&=&\langle \Phi([w_1:\cdots:w_{d}]),X_j-\sum_{h=1}^{n-k}a_{jh}X_h\rangle\\
&=&|w_j|^2+\lambda_j-\sum_{h=1}^{n-k}a_{jh}(|w_h|^2+\lambda_h).
\end{array}
$$
Therefore $[w_1:\cdots:w_{d}]\in\Phi_\mathfrak{k}^{-1}(0)$ if and only if
\begin{equation}\label{zerosetnew}
|w_j|^2=\sum_{h=1}^{n-k}a_{jh}(|w_h|^2+\lambda_h)-\lambda_j,\quad j=n-k+1,\ldots,d.
\end{equation}
Consider 
$R_j=e_j-\sum_{h=1}^{n-k}a_{jh}e_h\in\R^{d_\mathfrak{k}}$, with $j=n-k+1,\ldots,d$; notice that $\pi(R_j)=Y_j\in\mathfrak{k}$ and therefore
$R_j\in \ker(\pi_{\mathfrak k})$.
Choose 
$r_{d_\mathfrak{k}+1},\ldots,r_d$ so that the vector 
$R=\sum_{j=d_{\mathfrak{k}}+1}^{d}r_jR_j$, satisfying $\pi(R)\in\mathfrak{k}$, verifies
$$\exp(R)[w_1:\cdots:w_d]= 
[\tilde{w_1}:\cdots:\tilde{w}_d],$$
where $\tilde{w}_{d_{\mathfrak{k}+1}},\ldots,\tilde{w}_d$ are nonnegative real numbers. Notice that we have acted with an
element that projects to $K$, thus 
$[[w_1:\cdots:w_d]]=[[\tilde{w_1}:\cdots:\tilde{w}_d]]$ in $M_\k$.
Since $\tilde{w_1},\ldots,\tilde{w}_{d_\mathfrak{k}}$ satisfy (\ref{zerosetnew}), we have that
$\langle J_\mathfrak{k}(\tilde{w_1},\ldots,\tilde{w}_{d_\mathfrak{k}}),R_j\rangle=0$, for 
$j=n-k+1,\ldots,d_\mathfrak{k}$. The vectors $R_j$, for $j=n-k+1,\ldots,d_\k$, form
a basis of $\ker(\pi_k)$.
Hence 
$\Psi_{\mathfrak{k}}(\tilde{w_1},\ldots,\tilde{w}_{d_\mathfrak{k}})=(i^*_\mathfrak{k}\circ J_\mathfrak{k})(\tilde{w_1},\ldots,\tilde{w}_{d_\mathfrak{k}})=0$
and, therefore,
$[\tilde{w_1}:\cdots:\tilde{w}_{d_\mathfrak{k}}]\in X_{\mathfrak{k}}$.

Let us finally show that, if $[[\tilde{\vw}]]=[[\hat{\vw}]]\in M_{\mathfrak{k}}$
are such that $\tilde{w}_j,\hat{w}_j\in[0,\infty)$, for $j=d_{\mathfrak{k}}+1,\ldots,d$, then
$[\tilde{w}_1:\cdots:\tilde{w}_{d_\mathfrak{k}}]=
[\hat{w}_1:\cdots:\hat{w}_{d_\mathfrak{k}}]$ in $X_\k$. Since $[[\tilde{\vw}]]=[[\hat{\vw}]]$ there exist $X,Y\in\R^d$, with 
$\exp(X)\in N$ and $\Pi(\exp(Y))\in K$, such that
$$\exp(X)\exp(Y)(\tilde{w}_1,\ldots,\tilde{w}_{d_\mathfrak{k}},\tilde{w}_{d_\mathfrak{k}+1},\ldots,\tilde{w}_d)=(\hat{w}_1,\ldots,\hat{w}_{d_\mathfrak{k}}\hat{w}_{d_\mathfrak{k}+1},\ldots,\hat{w}_d).$$
Notice that $\pi(X)\in Q$ and $\pi(Y)\in \mathfrak{k}$. Moreover, since $\tilde{w}_j$ and $\hat{w}_j$, $j=d_\mathfrak{k}+1,\ldots,d$, are 
nonnegative real numbers by hypothesis, we have necessarily that $\tilde{w}_j=\hat{w}_j$ and that there exists $\underline{n}\in\Z^d$ such that
$X+Y+\underline{n}\in\R^{d_\mathfrak{k}}\times\Z^{d-d_\mathfrak{k}}$. Notice that
$\pi(X+\underline{n})\in Q$ and $\pi(Y)\in \mathfrak{k}$. Therefore $\exp(X+\underline{n}+Y)\in N_\mathfrak{k}$ 
and
$\exp(X+\underline{n}+Y)(w_1,\ldots,w_{d_\mathfrak{k}})=(\tilde{w}_1,\ldots,\tilde{w}_{d_\mathfrak{k}})$. Thus
$[\tilde{w}_1:\cdots:\tilde{w}_{d_\mathfrak{k}}]=[\hat{w}_1:\cdots:\hat{w}_{d_\mathfrak{k}}]$.
\qed\end{proof}

\noindent\textbf{Proof of Theorem~\ref{reduction}.}\quad
We define a collection of quasifold charts for the orbit space $M_\mathfrak{k}=\Phi_{\k}^{-1}(0)/K$ as follows. 
Let $\nu_{\k}$ be a vertex of $\Delta_\k$ and let $\nu$ be a vertex of 
$\Delta$ lying in the closure of the smallest face $F$ of $\Delta$ containing $\nu_\k$.
By Proposition~\ref{free action},  we can order the normal vectors $\xd$ for $\Delta$ so that:
\begin{itemize}
\item $\{p(X_1),\ldots,p(X_{d_\k})\}$ are normal vectors for $\Delta_\k$;
\item $\{p(X_1),\ldots,p(X_{n-k})\}$ are the normal vectors for $\Delta_\k$ corresponding to the facets  that meet at $\nu_\mathfrak{k}$;
\item $X_1,\ldots,X_{n-k},X_{d_\k+1},\ldots,X_{d_\k+k}$ are the normal vectors for $\D$ corresponding to the facets that meet at $\nu$. 
\end{itemize}
Remark that the face $F$ is given by the intersection of the hyperplanes
corresponding to $\{X_1,\ldots,X_{n-k}\}$ and has dimension $k$. Moreover, notice that
Proposition~\ref{free action} implies that $\nu\notin\Delta_\k$. 

Consider the non--empty open subset of $\C^{n-k}$ given by
$$\tilde{U}_{\nu_\k}=\left\{\vz\in\C^{n-k}\,|\,
\sum_{h=1}^{n-k}a_{jh}(|z_h|^2+\lambda_h)-\lambda_j>0,\;j=n-k+1,\ldots,d_\k\right\}.$$
We send it to the chart $(U_{\nu},\rho_{\nu},\tilde{U}_{\nu}/\Gamma_\nu)$
of $M$ corresponding to the vertex $\nu$ by the continuous, equivariant map:
$$\begin{array}{ccccc}
\tilde{\Psi}^{\nu}_\k&:&\tilde{U}_{\nu_\k}&\longrightarrow&\tilde{U}_{\nu}\\
&&(z_1,\ldots,z_{n-k})&\longmapsto&\left(z_1,\ldots,z_{n-k},w_{d_\k+1},\ldots,w_{d_\k+k}\right),
\end{array}
$$
where the $w_j$'s are given by
$$
w_j=\sqrt{\sum_{h=1}^{n-k}a_{jh}(|z_h|^2+\lambda_h)-\lambda_j},\quad j=d_\mathfrak{k}+1,\ldots,d.$$
It is straightforward to check that the induced mapping
$$
\begin{array}{ccccc}
\Psi^{\nu}_\k&:&\tilde{U}_{\nu_\k}/\Gamma_{\nu}^\k&\longrightarrow & \Big(\tilde{U}_{\nu}/\Gamma_{\nu}\Big)\cap\rho_{\nu}^{-1}(\Phi_\k^{-1}(0))\\
&&[z_1:\cdots:z_{n-k}]&\longmapsto&
[z_1:\cdots:z_{n-k}:w_{d_\k+1}:\cdots:w_{d_\k+k}]
\end{array}
$$
is a homeomorphism; the group $\Gamma_{\nu}$ here is given by 
$$N\cap[(S^1)^{n-k}\times\{1\}^{d_\k-n+k}\times(S^1)^k\times\{1\}^{d-d_{\k}-k}],$$ while $\Gamma_{\nu}^\k$
is the subgroup of $\Gamma_{\nu}$ acting on $\tilde{U}_{\nu_\k}$  that is given by
$$N\cap[(S^1)^{n-k}\times\{1\}^{d-n+k)}].$$
Consider now the countable group $\Gamma_{\nu_k}=N_\k\cap[(S^1)^{n-k}\times\{1\}^{d_\k-n+k}]$.
By the same argument used at the end of the proof of Lemma~\ref{rotazioni}, we have that
$\Gamma_\nu^\k$ is a subgroup of $\Gamma_{\nu_\k}$ and the mapping
$$
\begin{array}{ccccc}
\rho_{\nu}\circ\tilde{\Psi}^{\nu}_\k&:&\tilde{U}_{\nu_\k}/\Gamma_{\nu_\k}&\longrightarrow & \frac{U_{\nu}\cap \Phi_\k^{-1}(0)}{K}\\
&&[z_1:\cdots:z_{n-k}]&\longmapsto&[z_1:\cdots:z_{n-k}:w_{n-k+1}:\cdots:w_d]
\end{array}
$$
is itself a homeomorphism. The explicit expression of the mapping $\rho_{\nu}\circ\tilde{\Psi}^{\nu}_\k$ is determined
by the mapping $\rho_\nu$ given in (\ref{rhonu}).
Repeating this procedure for each vertex  $\nu_{\k}$ of $\Delta_\k$ yields an atlas of $M_\k=\Phi_\k^{-1}(0)/K$.

Now, using Lemma~\ref{rotazioni}, we can define the following mapping
$$
\begin{array}{ccccc}
g&\colon&M_\mathfrak{k}&\rightarrow&X_\mathfrak{k}\\
&&[[\vw]]&\longmapsto&[\tilde{w}_1:\cdots:\tilde{w}_{d_{\mathfrak{k}}}]
\end{array}
$$

\noindent\underline{$g$ is injective:} let $g([[\vw]])=g([[\vw']])$, then there exist
$\tilde{\vw}$ and $\tilde{\vw}'$ as in the statement of Lemma~\ref{rotazioni} such that 
$[\tilde{w}_1:\cdots:\tilde{w}_{d_{\mathfrak{k}}}]=
[\tilde{w}'_1:\cdots:\tilde{w}'_{d_{\mathfrak{k}}}]$. Then there exists
$W\in\R^{d_{\mathfrak{k}}}\times\{0\}^{d-d_{\mathfrak{k}}}$ such that $\pi(W)\in Q+\mathfrak{k}$ 
and $\exp(W)(\tilde{w}_1,\ldots,\tilde{w}_{d_{\mathfrak{k}}})
=(\tilde{w}'_1,\ldots,\tilde{w}'_{d_{\mathfrak{k}}})$. We can write $W=X+Y$ with $X,Y\in\R^d$ such that $\pi(X)\in Q$, $\pi(Y)\in \mathfrak{k}$. 
Moreover, by (\ref{zerosetnew}), $\tilde{w}_j=\tilde{w}'_j$, $j=d_{\mathfrak{k}}+1,\ldots,d$. 
Thus $\exp(X)\exp(Y)(\tilde{w}_1,\ldots,\tilde{w}_{d})=(\tilde{w}'_1,\ldots,\tilde{w}'_{d})$ with
$\exp(X)\in N$ and $\exp(Y)\in K$; therefore 
$[[\vw]]=[[\tilde{\vw}]]=[[\tilde{\vw}']]=[[\vw']]$.

\noindent\underline{$g$ is surjective:} let $[z_1:\cdots:z_{d_{\mathfrak{k}}}]\in X_{\mathfrak{k}}$. Consider first, as before, 
$R_j=e_j-\sum_{h=1}^{n-k}a_{jh}e_h\in\R^{d_\mathfrak{k}}$, with $j=n-k+1,\ldots,d$; notice that $\pi(R_j)=Y_j\in\mathfrak{k}$ and therefore
$R_j\in\mathfrak{n}_\mathfrak{k}$. By (\ref{delzantspace}) we have that 
$i^*_\mathfrak{k}\circ J_\mathfrak{k}([z_1:\cdots:z_{d_\mathfrak{k}}])=0$. This implies
$$\langle J_\mathfrak{k}([z_1:\ldots:z_{d_\mathfrak{k}}],R_j\rangle=0,\quad\forall j=d-n+k,\ldots,d_\mathfrak{k},$$
which, in turn, implies (\ref{zerosetnew}) for $j=d-n+1,\ldots,d_\mathfrak{k}$.
Take 
$$
w_j=\sqrt{\sum_{h=1}^{n-k}a_{jh}(|z_h|^2+\lambda_h)-\lambda_j},\quad j=d_\mathfrak{k}+1,\ldots,d.
$$
Then $[z_1:\cdots,z_{d_\mathfrak{k}}:w_{d_\mathfrak{k}+1}:\cdots:w_d]\in M_{\mathfrak{k}}$ and its image under the mapping $g$ is exactly
$[z_1:\cdots:z_{d_{\mathfrak{k}}}]$. Therefore $$g^{-1}([z_1:\cdots:z_{d_{\mathfrak{k}}}])=
[z_1:\cdots,z_{d_\mathfrak{k}}:w_{d_\mathfrak{k}+1}:\cdots:w_d].$$
Since the mapping $g$ restricted to each chart is the identity, we can conclude that it is an equivariant diffeomorphism. 
Consider the following diagram
$$\xymatrix{\Phi_\k^{-1}(0)\ar[d]_{\rho}\ar[r]^{\iota}&(M,\omega)\\
\left(M_{\k},\omega_{\k}\right)
}.$$
One can verify, by the explicit local identifications above, that the reduced symplectic form $\omega_{\k}$ satisfies, as in the smooth case, 
$\rho^*\omega_{\k}=\iota^*\omega$, and that $g$ is a symplectomorphism.
Moreover, the mapping induced by $j^*\circ\Phi$ on $M_{\k}$ is the moment mapping for the action of $D^n/K$. \qed
\section{Some applications}
If $\D\subset\rndu$ is a smooth pointed polyhedron, one can choose normal vectors, $\xd$, 
for $\D$ that are primitive in $\Z^n$.
Then Theorem~\ref{thmp1} applied to $(\D, \{\xd\}, \Z^n)$
yields a symplectic toric manifold (when $\D$ is a polytope, this is the classical compact Delzant space corresponding to $\D$).
Moreover, in this case, $K=\mathfrak{k}/\mathfrak{k}\cap \Z^n$ is a general Lie 
subgroup of the torus $T^n=\rn/\Z^n$; it is a torus itself if, and only if, $\hbox{span}_{\R}(\mathfrak{k}\cap \Z^n)=\mathfrak{k}$.
Thus Theorem~\ref{reduction} yields the following
\begin{cor}\label{liscio}
Let $\D\subset\rndu$ be a smooth pointed polyhedron and let $M$ be the corresponding symplectic toric manifold. 
Consider the induced action on $M$ of any Lie subgroup $K$ of $T^n$, with moment mapping $\Phi_\mathfrak{k}=j^*\circ\Phi$. 
Assume that $K$ acts on $\Phi_\mathfrak{k}^{-1}(0)$ with $0$--dimensional isotropy groups.
Then the orbit space $M_\mathfrak{k}=\Phi_\mathfrak{k}^{-1}(0)/K$ is a symplectic quasifold of dimension $2(n-k)$, 
acted on by the quasitorus $T^n/K$. 
Moreover, $M_\mathfrak{k}$ is equivariantly symplectomorphic to the
symplectic toric quasifold $X_\k$ corresponding to $(\D_\mathfrak{k},\{p(X_1),\ldots,p(X_{d_\mathfrak{k}})\}, p(\Z^n))$. 
\end{cor}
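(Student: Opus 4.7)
The plan is to follow the same strategy as smooth Marsden--Weinstein reduction, but adapted to the quasifold atlases constructed via the generalized Delzant procedure. Concretely, I will build explicit quasifold charts on $M_{\mathfrak k}$ indexed by the vertices of $\Delta_{\mathfrak k}$, and then define an obvious candidate map $g\colon M_{\mathfrak k}\to X_{\mathfrak k}$ using the rotation trick of Lemma~\ref{rotazioni}, and finally check that in each chart $g$ is the identity.

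For the atlas, first I would fix a vertex $\nu_{\mathfrak k}$ of $\Delta_{\mathfrak k}$ and, using Proposition~\ref{free action}, choose a vertex $\nu$ of $\Delta$ sitting in the closure of the smallest face of $\Delta$ that contains $\nu_{\mathfrak k}$. Proposition~\ref{free action} permits one to order the normal vectors so that $\{p(X_1),\ldots,p(X_{n-k})\}$ are the normals of the facets of $\Delta_{\mathfrak k}$ meeting at $\nu_{\mathfrak k}$, and $X_1,\dots,X_{n-k},X_{d_{\mathfrak k}+1},\ldots,X_{d_{\mathfrak k}+k}$ are the normals of the facets of $\Delta$ meeting at $\nu$. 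Then I would introduce the open set $\tilde U_{\nu_{\mathfrak k}}\subset\C^{n-k}$ cut out by the inequalities coming from the facets of $\Delta_{\mathfrak k}$, and define $\tilde\Psi^{\nu}_{\mathfrak k}\colon\tilde U_{\nu_{\mathfrak k}}\to \tilde U_{\nu}$ by completing $(z_1,\ldots,z_{n-k})$ with the positive square roots $w_j$ of the quantities $\sum_{h=1}^{n-k}a_{jh}(|z_h|^2+\lambda_h)-\lambda_j$ for $j=d_{\mathfrak k}+1,\ldots,d$. By construction the image lands in $\rho_\nu^{-1}(\Phi_{\mathfrak k}^{-1}(0))$, and the induced map descends to a homeomorphism between $\tilde U_{\nu_{\mathfrak k}}/\Gamma_{\nu_{\mathfrak k}}$ and an open neighborhood of $[[\,\cdot\,]]$ in $M_{\mathfrak k}$, where $\Gamma_{\nu_{\mathfrak k}}=N_{\mathfrak k}\cap[(S^1)^{n-k}\times\{1\}^{d_{\mathfrak k}-n+k}]$; the identification of countable groups is exactly the argument at the end of the proof of Lemma~\ref{rotazioni}.

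Next I would define $g\colon M_{\mathfrak k}\to X_{\mathfrak k}$, $[[\vw]]\mapsto[\tilde w_1:\cdots:\tilde w_{d_{\mathfrak k}}]$, where $\tilde\vw$ is obtained from $\vw$ as in Lemma~\ref{rotazioni}. Lemma~\ref{rotazioni} makes $g$ well defined. For injectivity: if $g([[\vw]])=g([[\vw']])$, pull the identification of the first $d_{\mathfrak k}$ coordinates back to an element $W=X+Y$ with $\pi(X)\in Q$ and $\pi(Y)\in\mathfrak k$, and use (\ref{zerosetnew}) together with the nonnegativity of the tail coordinates to conclude that $\tilde w_j=\tilde w_j'$ for $j>d_{\mathfrak k}$ as well, so $[[\vw]]=[[\vw']]$. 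For surjectivity, starting from $[z_1:\cdots:z_{d_{\mathfrak k}}]\in X_{\mathfrak k}$, use $\Psi_{\mathfrak k}=0$ on the generating vectors $R_j=e_j-\sum a_{jh}e_h$ of $\ker\pi_{\mathfrak k}$ to read off (\ref{zerosetnew}) for $j=n-k+1,\ldots,d_{\mathfrak k}$, and then define $w_j$ by the same square-root formula for $j>d_{\mathfrak k}$; this lifts to a point of $\Phi_{\mathfrak k}^{-1}(0)$ mapped to the given element of $X_{\mathfrak k}$.

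Finally, comparing the chart $(\tilde U_{\nu_{\mathfrak k}}/\Gamma_{\nu_{\mathfrak k}},\rho_\nu\circ\tilde\Psi^{\nu}_{\mathfrak k})$ of $M_{\mathfrak k}$ with the chart of $X_{\mathfrak k}$ based at $\nu_{\mathfrak k}$ obtained from Theorem~\ref{thmp1} applied to $(\Delta_{\mathfrak k},\{p(X_1),\ldots,p(X_{d_{\mathfrak k}})\},p(Q))$, one sees that $g$ reads as the identity $(z_1,\ldots,z_{n-k})\mapsto(z_1,\ldots,z_{n-k})$ in those coordinates; this gives equivariance, smoothness and the fact that $g$ is a diffeomorphism of quasifolds. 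For the symplectic statement, I would check locally, using the commutative square
\[
\xymatrix{\Phi_{\mathfrak k}^{-1}(0)\ar[d]_{\rho}\ar[r]^{\iota}&(M,\omega)\\(M_{\mathfrak k},\omega_{\mathfrak k})&},
\]
that the standard relation $\rho^*\omega_{\mathfrak k}=\iota^*\omega$ pins down $\omega_{\mathfrak k}$ uniquely and that this $\omega_{\mathfrak k}$ agrees, under $g$, with the symplectic form on $X_{\mathfrak k}$ coming from Theorem~\ref{thmp1}; the induced moment map for $D^n/K$ is then $j^*\circ\Phi$ by construction. The main obstacle I expect is bookkeeping the three countable groups $\Gamma_{\nu}$, $\Gamma_{\nu}^{\mathfrak k}$, $\Gamma_{\nu_{\mathfrak k}}$ so that the chart map on $M_{\mathfrak k}$ descends to a homeomorphism and matches the chart of $X_{\mathfrak k}$; everything else is an adaptation of the smooth Marsden--Weinstein argument.
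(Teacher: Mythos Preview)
Your proposal is correct, but it is not the paper's proof of \emph{this} statement. What you have written is, almost step for step, the paper's proof of Theorem~\ref{reduction}: the chart construction at each vertex $\nu_{\mathfrak k}$ via the lift $\tilde\Psi^{\nu}_{\mathfrak k}$, the identification of the countable groups $\Gamma_{\nu}$, $\Gamma_{\nu}^{\mathfrak k}$, $\Gamma_{\nu_{\mathfrak k}}$, the definition of $g$ through Lemma~\ref{rotazioni}, the injectivity/surjectivity arguments, and the verification of $\rho^*\omega_{\mathfrak k}=\iota^*\omega$. In the paper, all of that work is done once and for all for an arbitrary quasilattice $Q$ in Theorem~\ref{reduction}.

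Corollary~\ref{liscio} is then a one--line specialization: take $Q=\Z^n$, so that $D^n=T^n$, $M$ is the ordinary symplectic toric manifold attached to the smooth polyhedron $\Delta$, and $K=\mathfrak{k}/(\mathfrak{k}\cap\Z^n)$ is a general (possibly non--closed) Lie subgroup of $T^n$; Theorem~\ref{reduction} applied verbatim gives the conclusion with $p(Q)=p(\Z^n)$. No new argument is needed. So while nothing you wrote is wrong, you are re--proving the main theorem rather than invoking it; the paper's proof of the corollary is simply ``apply Theorem~\ref{reduction} with $Q=\Z^n$''.
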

Notice that, whenever $K$ is a torus, $M_\mathfrak{k}$ is the usual symplectic reduced space: it as an orbifold in general, a manifold if
the $K$--isotropy groups are all trivial.
\begin{example}[Reducing $\C\times S^2$ with respect to any Lie subgroup of the $2$--torus]{\rm 
Consider the strip $\D=[-1,\infty)\times [0,1]\subset(\R^2)^*$. It is an elementary example of a smooth pointed polyhedron. 
If we apply Theorem~\ref{thmp1} to the triple 
\begin{equation}\label{tripla}\Big([-1,\infty)\times [0,1], \{(1,0),(0,1),(0,-1)\},\Z^2\Big),\end{equation}
we obtain the noncompact toric manifold $\C\times S^2$, endowed with the standard symplectic structure. The first factor of $S^1\times S^1$ 
acts on $\C$ linearly with weight $1$, while the second factor acts on $S^2$ by rotations around the $z$--axis. 
This action is Hamiltonian and the image of the corresponding moment mapping is $[-1,\infty)\times [0,1]$. Now consider, 
for any positive real number $a$, the line $\mathfrak{k}=\hbox{span}_{\R}\{(-1,a)\}\subset \R^2$ (see Figure~\ref{quasihirzefan}).
\begin{figure}[h]
\begin{center}
\includegraphics[width=5cm]{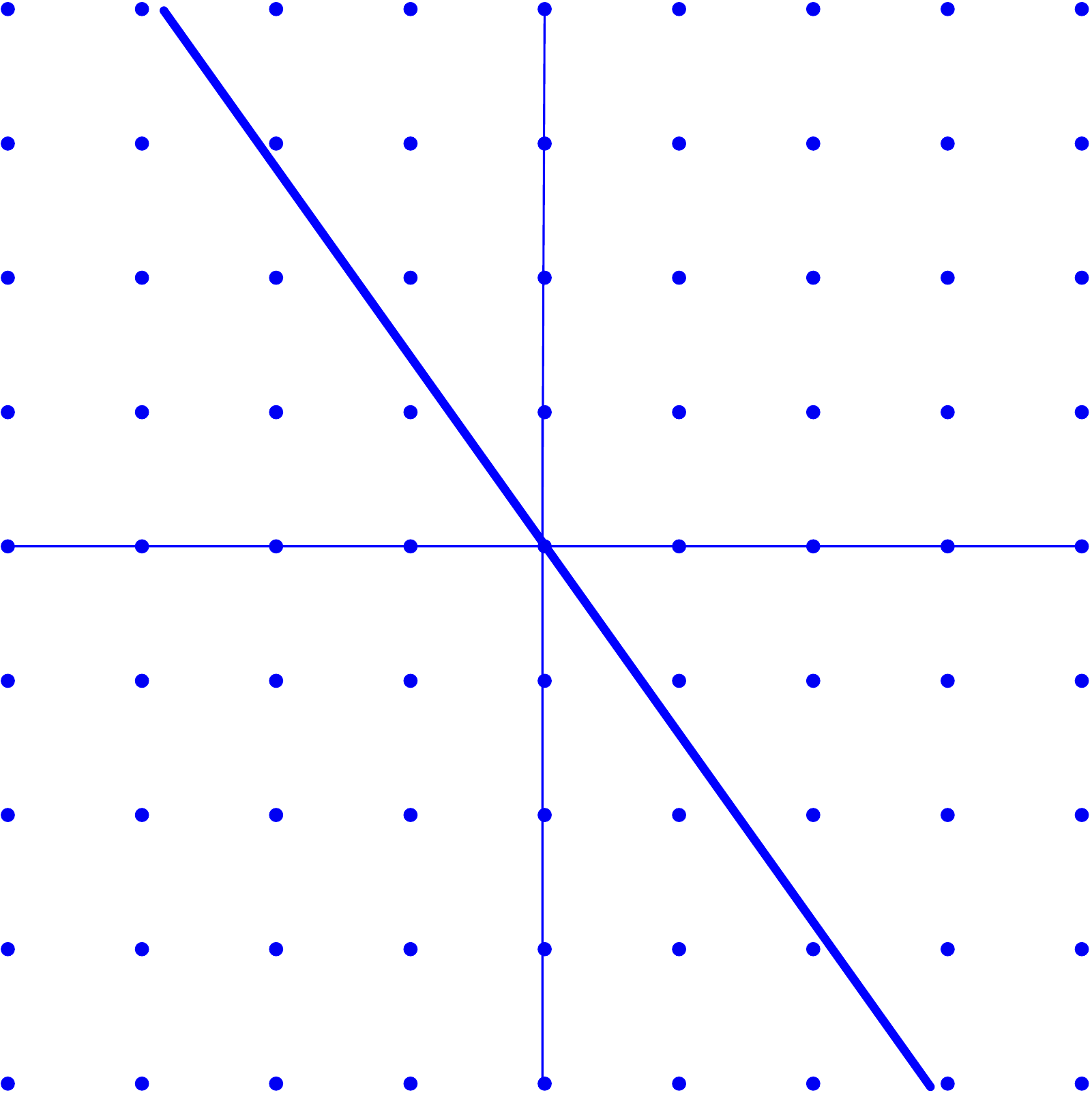}
\caption{The line $\mathfrak{k}$}
\label{quasihirzefan}
\end{center}
\end{figure}
The Lie subgroup $K=\k/(\k\cap \Z^2)$ is a circle if, and only if, $a$ is rational; otherwise it is the classical irrational 
wrap on $S^1\times S^1$. We want to reduce $\C\times S^2$ with respect to $K$, following Corollary~\ref{liscio}.
It is easy to check that the induced action of $K$ on $\Phi_\mathfrak{k}^{-1}(0)$
has $0$--dimensional isotropy groups. Notice that $\ker(j^*)$ can be identified with the line $x=ay$; therefore, $\D_\k$ is given by the segment
in Figure~\ref{hirzecut}.
\begin{figure}[h]
\begin{center}
\includegraphics[width=8cm]{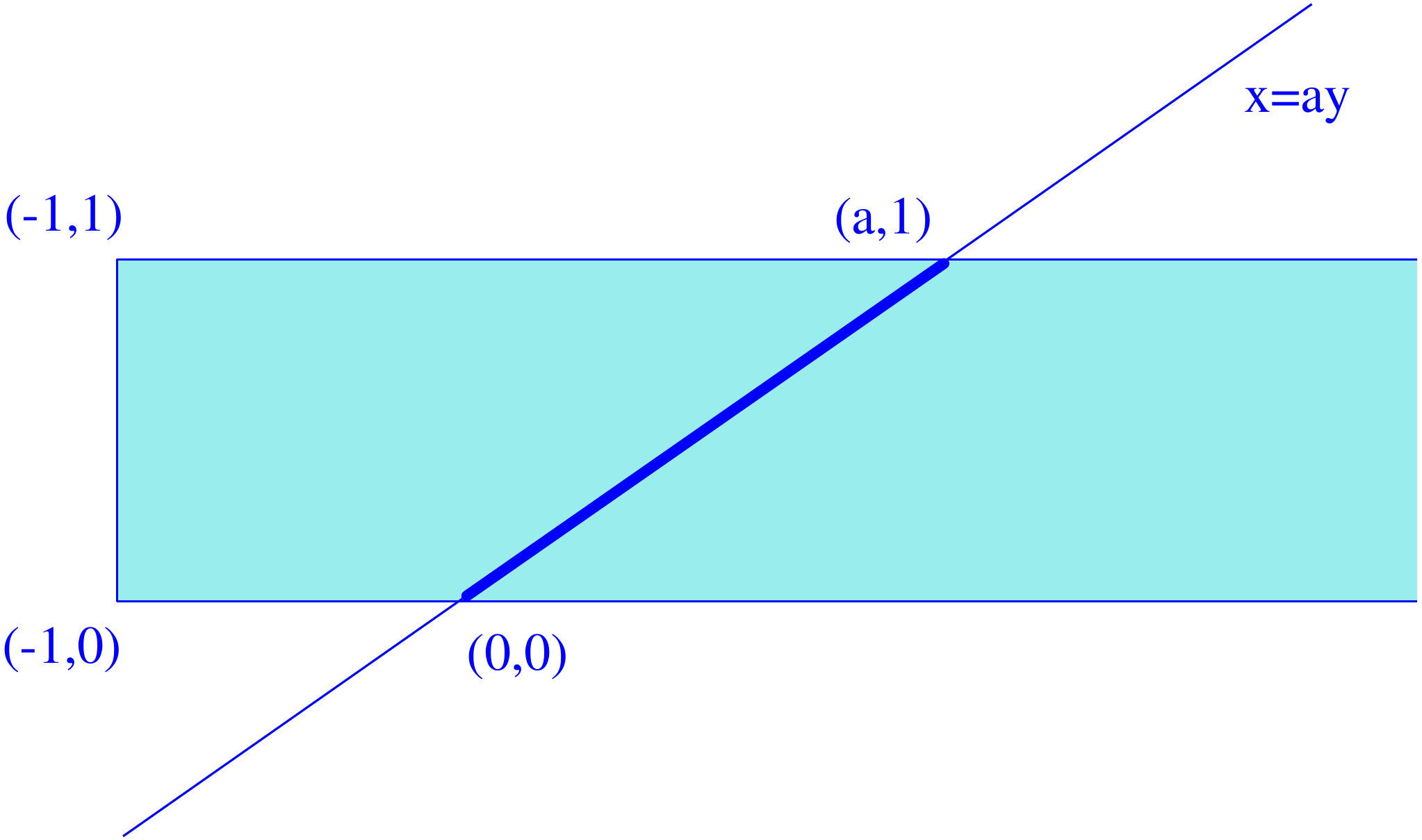}
\caption{The segment $\D_\k$}
\label{hirzecut}
\end{center}
\end{figure} 

Before we go on, it will be convenient to make the following identification. Let $f\,\colon\,\R^2/\k\longrightarrow\R$ be the linear isomorphism 
defined by $f(p(0,1))=1$. Then 
$f(p(0,-1))=-1$ and $f(p(1,0))=a$. Let $p(0,1)^*$
the basis of $(\R^2/\k)^*$ dual to $p(0,1)$. Then $p^*(p(0,1)^*)=(a,1)\in \ker j^*\subset(\R^2)^*$. Therefore, from (\ref{tripla}) we get that $\Delta_\k$ is sent to
$$\{\mu\in\R^*\;|\;\langle \mu,a\rangle\geq -1\}\cap
\{\mu\in\R^*\;|\;\langle \mu,1\rangle\geq 0\}\cap
\{\mu\in\R^*\;|\;\langle \mu,-1\rangle\geq -1\}.$$
We discard the first half--line, since its interior contains the intersection of the
remaining two. We obtain that $\D_\k\simeq [0,1]$ and that the corresponding triple is given by 
$$([0,1],\{1,-1\},\Z+a\Z).$$
If we apply Theorem~\ref{thmp1}, we find the quasifold 
$$X_\k=\frac{\{(z_1,z_2)\in\C^2\;|\;|z_1|^2+|z_2|^2=1\}}{\{(e^{2\pi i(t+am)},e^{2\pi i t})\in S^1\times S^1\;|\;(t,m)\in\R\times\Z\}}\simeq \frac{S^2}{\Gamma_a},$$
where $\Gamma_a=(\Z+a\Z)/\Z\simeq\{e^{2\pi i a m}\;|\;m\in\Z\}$. 
The quasifold $X_\k$ is acted on by the $1$--dimensional quasitorus $\R/(\Z+a\Z)$, while the reduced symplectic quasifold $M_\k$ is endowed with the residual action of the $1$--dimensional quasitorus $T^2/K\simeq\R/(\Z+a\Z)$.
By Corollary~\ref{liscio}, the quasifolds $X_\k$ and $M_\k$ are equivariantly symplectomorphic. For an irrational number $a$, the quasifold $X_\k$ can be viewed as a nonrational counterpart of $S^2$, 
similarly to the {\em quasisphere} introduced in \cite[Examples 1.13, 3.5]{p}.
In \cite[Example~2.4.3]{bz} the quotient $X_\k$ was obtained as the leaf space of a holomorphic foliation.  Moreover, $X_\k$ arises in the construction
of a one--parameter family of quasifolds $\F_a$ that contains all of the Hirzebruch surfaces \cite[Section~3]{hirze}. 
In fact, the spaces $\F_a$ turn out to be equal to the disjoint union of a dense open subset and of $X_\k$.
They are obtained by cutting the symplectic manifold $\C\times S^2$ in the direction $(-1,a)$; this amounts to cutting the
above strip with the line $x=ay$. Since standard cutting only works when the number $a$ is rational, we use a generalization of this procedure for
nonrational simple pointed polyhedra (see Remark~\ref{cutting}).
}
\end{example}

\begin{remark}[Diffeologies]{\rm The $1$--dimensional quasitorus $\R/(\Z+a\Z)$ was studied by Donato and Iglesias within the theory of diffeological spaces \cite{patrick1,patrick2}; in this setting Iglesias introduced the terminology {\em irrational torus}.}
\end{remark}

\begin{remark}[Nonrational symplectic cutting]\label{cutting}
{\rm In our article \cite{cut} we have extended symplectic cuts \cite{lerman} and blow--ups to symplectic toric quasifolds. In doing so, we have
already implicitly used symplectic quotients, and proved with a direct argument that cuts are symplectic quasifolds.
One could also proceed by applying Theorem~\ref{reduction} above. However, the direct approach in \cite{cut} 
is preferable, since it also allows cutting through vertices
of the original simple pointed polyhedron. In this case, the hypothesis of Theorem~\ref{reduction} that 
$K$ acts on $\Phi_\mathfrak{k}^{-1}(0)$ with $0$--dimensional isotropy groups is 
no longer satisfied; however, as it turns out, the quotient is still a symplectic quasifold, 
mirroring what happens in the smooth case \cite{gs1}.}
\end{remark}
\section*{Acknowledgements}
This research was partially supported by grant PRIN 2015A35N9B\_$\!$\_013 (MIUR, Italy) and by GNSAGA (INdAM, Italy).

\sc{Dipartimento di Matematica e Informatica "U. Dini"\\
Universit\`a degli Studi di Firenze \\
Viale Morgagni 67/A\\
50134 Firenze, ITALY\\}
{\it E-mail addresses:} \tt{fiammetta.battaglia@unifi.it, elisa.prato@unifi.it}

\end{document}